\def\bysame{\leavevmode\hbox to3em{\hrulefill}\thinspace}
\DeclarePairedDelimiterX{\norm}[1]{\lVert}{\rVert}{#1}
\newtheorem{thm}{Theorem}[section]
\newtheorem{prop}[thm]{Proposition}
\newtheorem{lem}[thm]{Lemma}
\theoremstyle{definition}
\newtheorem{definition}[thm]{Definition}
\newtheorem{example}[thm]{Example}
\theoremstyle{remark}
\newtheorem{remark}[thm]{Remark}
\begin{document}

\title[The Gauss map on translational Riemannian manifolds]{The Gauss map on translational Riemannian manifolds and the topology of hypersurfaces}

\author{Eduardo R. Longa}
\address{Instituto de Matem\'{a}tica e Estat\'{i}stica, Universidade Federal do Rio Grande do Sul, Porto Alegre, Brasil} 
\email{eduardo.longa@ufrgs.br}

\author{Jaime B. Ripoll}
\address{Instituto de Matem\'{a}tica e Estat\'{i}stica, Universidade Federal do Rio Grande do Sul, Porto Alegre, Brasil} 
\email{jaime.ripoll@ufrgs.br}

\begin{abstract}
We introduce the notion of translational Riemannian manifolds and define a Gauss map for orientable immersed hypersurfaces lying in these ambients,
an associated translational curvature and prove a Gauss-Bonnet theorem. We also use this Gauss map to prove that if $M^{n}$ is a compact, connected
and oriented immersed hypersurface of the unit sphere $\mathbb{S}^{n+1}$ ($n\geq2$) contained in a geodesic ball of radius $R$ and whose principal
curvatures are strictly bigger than $\tan\left( R/2 \right)$, then $M$ is diffeomorphic to $\mathbb{S}^{n}$. Additionally, we show that for any 
$\varepsilon\in(0,\sqrt{2}-1)$ there exists a compact, connected and oriented immersed hypersurface $M_{\varepsilon}$ of $\mathbb{S}^{n+1}$ whose
principal curvatures are strictly bigger than $\varepsilon \tan \left( R/2 \right)$ but $M_{\varepsilon}$ is not homeomorphic to a sphere. Finally, 
using this previous result, we reobtain a theorem of Qiaoling Wang and Changyu Xia (\cite{xia06}) which asserts that if a compact and oriented
hypersurface of $\mathbb{S}^{n+1}$ is contained in an open hemisphere and has nowhere zero Gauss-Kronecker curvature, then it is diffeomorphic to 
$\mathbb{S}^n$.
\end{abstract}

\keywords{rigidity; hypersurfaces; Gauss-Bonnet; topology; sphere}
\subjclass[2010]{Primary 53C24; 53C42}
\maketitle
\section{Introduction}

In this paper we define a Gauss map for an orientable hypersurface of a Riemannian manifold by extending the notion of translation of the Euclidean
space --- used to define the Euclidean Gauss map --- to more general ambient spaces. We then use this map to obtain results on the topology of the 
hypersurface.

The main objects we are interested in are \emph{translational manifolds}, defined as follows:

\begin{definition} A translational Riemannian manifold is a pair $\left( \overline{M}, \Gamma \right)$, where $\overline{M}$ is an $(n+1)$-dimensional
Riemannian manifold, $\Gamma : T \overline{M} \to \overline{M} \times V$ is a smooth \emph{vector bundle} map, and $V$ is an $(n+1)$-dimensional real
vector space with an inner product such that the map $\Gamma_p : T_p \overline{M} \to V$ implicitly defined by

\begin{align*}
v \mapsto \Gamma(p,v) = \left( p,\Gamma_{p}(v) \right)
\end{align*} 

\noindent is an isometry for every point $p \in \overline{M}$. The manifold $\overline{M}$ is said to be equipped with a translational structure.
\end{definition}

The maps $\Gamma_{p}$ are to be thought as translations, as means of identifying the tangent spaces to $\overline{M}$ with the vector space $V$. Notice that any
translational Riemannian manifold is parallelisable, that is, it admits as many linearly independent vector fields as its dimension. Conversely, any paralellisable 
manifold has infinitely many translational structures (see next section). Natural examples of such manifolds are obtained by considering left translation on Lie groups
with a left invariant metric and parallel transport to a fixed point, defined on the manifold minus the cut locus of the point (Examples \ref{Left translation} and 
\ref{Parallel transport}). The former case was studied in \cite{ripoll91}, and in the present paper we investigate the latter when $\overline{M}$ is an Euclidean
sphere. Both scenarios reduce to the usual Euclidean translations when $\overline{M} = \mathbb{R}^{n+1}$.

On a translational Riemannian manifold $\left( \overline{M}, \Gamma \right)$ one can naturally define a \emph{Gauss map} for an orientable
hypersurface $M^{n}$ of $\overline{M}.$ Indeed, being $\eta$ a unit normal vector field along $M$, the Gauss map $\gamma:M\rightarrow\mathbb{S}^{n}$ 
of $M$ is defined by $\gamma(p) = \Gamma_{p}(\eta(p))$, where $\mathbb{S}^n \subset V$ is the unit sphere centered at the origin of $V$. It is easy to
see that $\Gamma_{p}^{-1}\circ D \gamma(p)$ is a linear map on $T_{p} M$ and it is shown (Proposition \ref{relationship}) that $\Gamma_{p}^{-1}\circ D \gamma(p) 
= - (A_{p}+ \alpha_{p})$, where $A_{p}$ is the shape operator of $M$ and $\alpha_{p}$ is a \emph{translational shape operator} that depends essentially on $\Gamma$.
The \emph{translational curvature} $\kappa_{\Gamma}$ of $M$ is defined by $\kappa_{\Gamma}(p) = \det \left( \Gamma_{p}^{-1}\circ D \gamma(p) \right)$. Observe that
$\kappa_{\Gamma}$ is the Gauss-Kronecker curvature of $M$ when $M$ is a hypersurface of $\mathbb{R}^{n+1}$ and $\gamma$ is the usual Gauss map associated to the
translation $\Gamma_{p}(v) = v$.

Our first result is a Gauss-Bonnet theorem:

\begin{thm} \label{gaussbonnet} Let $\left( \overline{M}, \Gamma \right)$ be a translational Riemannian manifold and $M^n$ a compact, connected and orientable immersed
hypersurface of even dimension of $\overline{M}$, and denote by $\omega$ the volume element of $M$ induced by the metric of $\overline{M}$. Then

\begin{align*}
\int_M \kappa_\Gamma \, \omega = \frac{c_n}{2} \chi(M),
\end{align*}

\noindent where $c_n$ is the volume of $\mathbb{S}^n \subset V$ and $\chi(M)$ is the Euler characteristic of $M$.
\end{thm}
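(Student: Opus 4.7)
My plan is to identify $\int_M \kappa_\Gamma\,\omega$ with $\deg(\gamma)\cdot c_n$ via the change-of-variables/degree formula and then to establish the Hopf-type identity $\deg(\gamma) = \chi(M)/2$ by counting zeros of a suitable vector field on $M$, which is where the even-dimensional hypothesis will enter.

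For the first step, observe that at each $p\in M$ the map $\Gamma_p\colon T_p\overline{M}\to V$ is a linear isometry sending $\eta(p)$ to $\gamma(p)$, hence it restricts to a (necessarily orientation-compatible) isometry $T_pM = \eta(p)^\perp \to T_{\gamma(p)}\mathbb{S}^n = \gamma(p)^\perp$. Evaluating $\gamma^\ast\omega_{\mathbb{S}^n}$ on an oriented orthonormal basis of $T_pM$ therefore produces $\det(\Gamma_p^{-1}\circ D\gamma(p)) = \kappa_\Gamma(p)$, so that $\gamma^\ast\omega_{\mathbb{S}^n} = \kappa_\Gamma\,\omega$; the standard degree formula yields $\int_M \kappa_\Gamma\,\omega = \deg(\gamma)\,c_n$.

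For the Hopf identity, I would pick $v\in\mathbb{S}^n\subset V$ which is a regular value of $\gamma$ (so $-v$ is as well, since $v\mapsto-v$ is a diffeomorphism), and form the smooth vector field $X(q) = \Gamma_q^{-1}(v)$ on $\overline{M}$. Its tangential part $X^T$ on $M$ vanishes precisely on $\gamma^{-1}(\{v,-v\})$, because $X^T(p)=0$ iff $\Gamma_p^{-1}(v)\parallel\eta(p)$, i.e., $v=\pm\gamma(p)$. The main obstacle is then the index computation: writing $X(p) = \varepsilon\eta(p)$ with $\varepsilon\in\{\pm1\}$ and differentiating $X^T = X - \langle X,\eta\rangle\eta$, the Weingarten formula gives
\begin{align*}
DX^T(p)(u) = (\overline{\nabla}_u X)^T + \varepsilon A_p u.
\end{align*}
To evaluate $\overline{\nabla}_u X$ I would differentiate the identity $\Gamma_q(X(q))=v$ at $q=p$, obtaining $\overline{\nabla}_u X = -\Gamma_p^{-1}(\tau_p(u, X(p)))$, where $\tau_p$ is the bilinear form encoding the horizontal derivative of $\Gamma$. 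Applying the same bookkeeping to the section $\eta$ recovers Proposition \ref{relationship} together with the identification $\tau_p(u,\eta(p)) = -\Gamma_p(\alpha_p u)$, so by bilinearity in the second slot one concludes
\begin{align*}
DX^T(p) = -\varepsilon\,\Gamma_p^{-1}\circ D\gamma(p)\quad\text{on } T_pM,
\end{align*}
whose determinant equals $(-\varepsilon)^n\kappa_\Gamma(p) = \kappa_\Gamma(p)$ precisely because $n$ is even. Hence $\operatorname{ind}_p(X^T) = \operatorname{sgn}\kappa_\Gamma(p)$, which is the local degree of $\gamma$ at $p$. The collapse of the two cases $\varepsilon=\pm1$ to a single sign is exactly where the evenness of $n$ is used, and this index calculation is the step I expect to require the most care.

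Applying Poincaré--Hopf then gives $\chi(M) = \sum_{p\in\gamma^{-1}(\{v,-v\})} \operatorname{ind}_p(X^T)$. Splitting the sum according to $\gamma(p)=v$ or $\gamma(p)=-v$ and using that the local degree of $\gamma$ summed over the preimage of any regular value equals $\deg(\gamma)$, we obtain $\chi(M) = 2\deg(\gamma)$. Combining with the first step yields $\int_M \kappa_\Gamma\,\omega = \tfrac{c_n}{2}\chi(M)$, as required.
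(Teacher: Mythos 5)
Your first step (the identity $\gamma^{*}\sigma=\kappa_{\Gamma}\,\omega$ and the degree formula) coincides with the paper's, but your derivation of $\deg(\gamma)=\tfrac{1}{2}\chi(M)$ is genuinely different and it works. The paper argues via characteristic classes: the bundle map $\widetilde{\Gamma}:TM\to T\mathbb{S}^n$ covering $\gamma$ exhibits $TM$ as $\gamma^{*}(T\mathbb{S}^n)$, so $e(M)=\gamma^{*}(e(\mathbb{S}^n))$ and naturality plus $\chi(\mathbb{S}^n)=2$ give the result in three lines. You instead run the classical Hopf-style argument: take the $\Gamma$-invariant field $X(q)=\Gamma_q^{-1}(v)$, observe that its tangential part vanishes exactly on $\gamma^{-1}(\{v,-v\})$, and compute the index at each zero. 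Your index computation is correct, and is in fact cleaner than you make it sound: near a zero $p$ with $X(p)=\varepsilon\eta(p)$ one has $X=\varepsilon\,\widetilde{\eta(p)}$, so $(\overline{\nabla}_u X)^{T}=\varepsilon\,\alpha_p(u)$ directly from the definition of the invariant shape operator, and Proposition \ref{relationship} gives $DX^{T}(p)=\varepsilon(A_p+\alpha_p)=-\varepsilon\,\Gamma_p^{-1}\circ D\gamma(p)$ without any need to introduce the auxiliary bilinear form $\tau_p$; the factor $(-\varepsilon)^n=1$ is indeed where evenness enters on your route, playing the role that $\chi(\mathbb{S}^n)=2$ plays in the paper's. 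What each approach buys: the paper's is shorter and conceptually transparent once one accepts the naturality of the Euler class, while yours is more elementary (only Sard, nondegenerate zeros, and Poincar\'e--Hopf) and makes visible the geometric mechanism --- each regular value is counted twice because $v$ and $-v$ give the same zero set. One small correction: being a regular value of $\gamma$ does not by itself make $-v$ one; you should choose $v$ in the (full-measure, hence nonempty) intersection of the regular values of $\gamma$ and of $a\circ\gamma$, where $a$ is the antipodal map. This is a cosmetic fix, not a gap.
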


If $M$ is compact and its translational curvature is everywhere nonzero, then $\gamma : M \to \mathbb{S}^n$ is a local diffeomorphism, and hence 
a global diffeomorphism, since $\mathbb{S}^n$ is simply connected (for $n \geq 2$). An instance which follows from this remark and from the above mentioned formula 
$D \gamma(p) = -\Gamma_{p} \circ (A_{p}+ \alpha_{p})$ is that if $M$ is a compact, connected and orientable hypersurface of a Lie group with a bi-invariant metric and its
principal curvatures have the same sign, then $\gamma$ is a diffeomorphism (see Theorem 9 of \cite{ripoll91}). Also using this remark, we prove:

\begin{thm} \label{our rigidity} Let $M^n$ be a compact, connected and oriented immersed hypersurface of $\mathbb{S}^{n+1}$, $n \geq 2$, and let $R$ be the radius of the
smallest geodesic ball containing $M$. If the principal curvatures $\lambda_1, \dots, \lambda_n$ of $M$ satisfy

\begin{align*}
|\lambda_i(p)| > \tan \left( \frac{R}{2}\right), \quad \forall \, p \in M, \; \forall \, i \in \{1, \dots, n \},
\end{align*}

\noindent then $M$ is diffeomorphic to $\mathbb S^n$. Moreover, for any $\varepsilon \in (0, \sqrt{2} - 1)$ there exists a compact, connected and oriented immersed hypersurface 
$M_\varepsilon$ of $\mathbb{S}^{n+1}$ whose principal curvatures satisfy

\begin{align}
|\lambda_i(p)| > \varepsilon \tan \left( \frac{R}{2}\right), \quad \forall \, p \in M_\varepsilon, \; \forall \, i \in \{1, \dots, n \}, \label{epsilon_inequality}
\end{align}

\noindent but $M_\varepsilon$ is not homeomorphic to a sphere.
\end{thm}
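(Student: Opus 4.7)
The plan is to apply the Gauss map machinery of the excerpt with the translational structure on $\mathbb{S}^{n+1}\setminus\{-p_0\}$ coming from parallel transport to $p_0$ (Example \ref{Parallel transport}), where $p_0$ is the centre of the smallest geodesic ball containing $M$. The hypothesis $|\lambda_i|>\tan(R/2)$ forces $R<\pi$, so $M$ lies in the domain of this structure $\Gamma_q(v)=P_{q\to p_0}(v)$; let $V=T_{p_0}\mathbb{S}^{n+1}$, let $\gamma:M\to\mathbb{S}^n\subset V$ be the associated Gauss map, and recall from Proposition \ref{relationship} that $\Gamma_p^{-1}\circ D\gamma(p)=-(A_p+\alpha_p)$.

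The technical core is an explicit computation of $\alpha_p$. Writing $r(p)=d(p,p_0)$ and $\nu(p)\in T_p\mathbb{S}^{n+1}$ for the unit outward radial vector at $p$, I would differentiate $\gamma(p)=P_{p\to p_0}(\eta(p))$ directly: the variation of parallel transport on $\mathbb{S}^{n+1}$ is governed by Jacobi field formulas along radial geodesics, which on the constant-curvature sphere admit a closed form. Comparing with $\Gamma_p^{-1}\circ D\gamma(p)=-(A_p+\alpha_p)$ then yields the clean scalar expression
\[
\alpha_p = -\tan\!\left(\tfrac{r(p)}{2}\right)\,\langle \eta(p),\nu(p)\rangle\,\mathrm{id}_{T_pM}.
\]
Since $|\langle\eta,\nu\rangle|\leq 1$ and $r(p)\leq R$, the eigenvalues $\lambda_i(p)-\tan(r(p)/2)\langle\eta,\nu\rangle$ of $A_p+\alpha_p$ are all nonzero under the hypothesis $|\lambda_i|>\tan(R/2)$. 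Thus $D\gamma(p)$ is an isomorphism for every $p\in M$, so $\gamma$ is a local diffeomorphism between compact connected manifolds; as $\mathbb{S}^n$ is simply connected for $n\geq 2$, $\gamma$ is a covering map of degree one, hence a diffeomorphism, and $M\cong\mathbb{S}^n$ follows.

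For the counterexample, I would take $M_\varepsilon$ (in fact independent of $\varepsilon$) to be the square Clifford torus
\[
M_\varepsilon = S^1\!\left(\tfrac{1}{\sqrt{2}}\right)\times S^{n-1}\!\left(\tfrac{1}{\sqrt{2}}\right)\subset\mathbb{S}^{n+1}.
\]
A direct computation of its shape operator gives principal curvatures $\pm 1$, so $|\lambda_i|\equiv 1$. Using the $O(2)\times O(n)$ symmetry, minimising $\max_{q\in M_\varepsilon}\arccos\langle p,q\rangle$ over $p\in\mathbb{S}^{n+1}$ reduces to a one-variable optimisation that yields $R=3\pi/4$, and hence $\tan(R/2)=\tan(3\pi/8)=1+\sqrt{2}$. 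For every $\varepsilon\in(0,\sqrt{2}-1)$ we then have $\varepsilon\tan(R/2)<(\sqrt{2}-1)(1+\sqrt{2})=1=|\lambda_i|$, giving the strict bound \eqref{epsilon_inequality}; and since $M_\varepsilon\cong S^1\times S^{n-1}$ is not simply connected, it is not homeomorphic to $\mathbb{S}^n$.

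The main obstacle is the computation of $\alpha_p$ in the second paragraph: although $\alpha_p$ is defined abstractly via how the bundle map $\Gamma$ varies with the base point, one has to unwind this using the constant sectional curvature of $\mathbb{S}^{n+1}$ to reach the scalar formula above. Once that formula is in hand, the invertibility of $A_p+\alpha_p$, the covering-space argument, and the Clifford torus verification are all routine.
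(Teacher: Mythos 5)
Your proposal is correct and follows essentially the same route as the paper: the paper's Proposition in Section \ref{sphere} computes $\alpha_p = \frac{\langle\eta(p),p_0\rangle}{1+\langle p,p_0\rangle}\,\mathrm{id}_{T_pM}$ via an explicit formula for parallel transport in $\mathbb{R}^{n+2}$, which agrees exactly with your asserted $-\tan\left(\tfrac{r(p)}{2}\right)\langle\eta(p),\nu(p)\rangle\,\mathrm{id}$, and the paper then bounds this coefficient by $\tan(R/2)$ (via Cauchy--Schwarz) before running the same covering-map argument. Your counterexample is precisely the $r=\tfrac{1}{\sqrt{2}}$ member of the family $\mathbb{S}^1(r)\times\mathbb{S}^{n-1}(\sqrt{1-r^2})$ used in the paper, and your direct computation $R=\tfrac{3\pi}{4}$, $\tan(R/2)=1+\sqrt{2}$ for the smallest containing ball is a clean shortcut past the paper's Lemma \ref{lemma} and interval argument (indeed cleaner, since that lemma as stated computes the largest ball avoiding $M_r$ rather than the smallest one containing it).
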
 

Lastly, we provide an alternative proof for Theorem 1.1 of \cite{xia06}:

\begin{thm} \label{xia rigidity} Let $M^n$ be a compact, connected and oriented immersed hypersurface of $\mathbb{S}^{n+1}$, $n \geq 2$, with non-vanishing Gauss-Kronecker
curvature. If $M$ is contained in an open hemisphere, then $M$ is diffeomorphic to $\mathbb S^n$.
\end{thm}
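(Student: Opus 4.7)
My plan is to reduce Theorem \ref{xia rigidity} to Theorem \ref{our rigidity} via a Möbius-type contraction of $\mathbb{S}^{n+1}$. Fix a pole $q$ of an open hemisphere containing $M$, so that $-q\notin M$ and the parallel-transport translational structure based at $q$ (Example \ref{Parallel transport}) is defined on all of $M$. Let $\sigma:\mathbb{S}^{n+1}\setminus\{-q\}\to T_q\mathbb{S}^{n+1}$ be stereographic projection and, for $t>0$, let $\phi_t$ be the conformal diffeomorphism of $\mathbb{S}^{n+1}\setminus\{-q\}$ acting as the Euclidean dilation $y\mapsto ty$ in the stereographic chart. Then $M_t:=\phi_t(M)$ is diffeomorphic to $M$ and concentrates near $q$ as $t\to 0^+$; since $\sigma(M_t)=t\sigma(M)$ and spherical distance satisfies $d^S(0,y)=2\arctan|y|$, one has $\tan(R(M_t)/2)=tR_0$, where $R_0$ is the Euclidean radius of the smallest ball around the origin of $T_q\mathbb{S}^{n+1}$ containing $\sigma(M)$.

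Next I would apply the standard conformal-change formula $\lambda_i^S=e^{-\varphi}(\lambda_i^E-\nu(\varphi))$, with $e^\varphi=2/(1+|y|^2)$, to the hypersurface $M_t$ (whose Euclidean image under $\sigma$ is $t\sigma(M)$). Since the Euclidean principal curvatures of $t\sigma(M)$ at $ty$ are $(1/t)\lambda_i^E(\sigma(M))(y)$ and the correction $\nu(\varphi)$ is $O(t)$ near the origin, the spherical principal curvatures of $M_t$ behave as $\lambda_i^S(M_t)(\phi_t(p))\sim \lambda_i^E(\sigma(M))(\sigma(p))/(2t)$ as $t\to 0^+$ whenever $\lambda_i^E(\sigma(M))\neq 0$. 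Hence, if \emph{every} Euclidean principal curvature of $\sigma(M)$ is non-zero on all of $\sigma(M)$, then for $t$ small enough the bound $|\lambda_i^S(M_t)|>\tan(R(M_t)/2)$ holds everywhere on $M_t$; Theorem \ref{our rigidity} then yields $M_t\cong\mathbb{S}^n$ and therefore $M\cong\mathbb{S}^n$.

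The hardest step is thus to establish the non-vanishing of every Euclidean principal curvature of $\sigma(M)$. Rearranging the conformal-change formula, $\lambda_i^E(\sigma(M))=0$ at $\sigma(p)$ is equivalent to $\lambda_i^S(M)(p)=\mu_p$, where $\mu_p$ is the scalar eigenvalue of the translational shape operator $\alpha_p$ coming from Example \ref{Parallel transport}---equivalently, $\kappa_\Gamma(p)=0$. The hemisphere restriction gives $|\mu_p|\le\tan(R/2)<1$, and each $\lambda_i^S$ has constant non-zero sign on the connected $M$ by continuity and the non-vanishing of $\det A$. I would then exclude the resonant equality $\lambda_i^S(p)=\mu_p$ through a careful initial choice of the pole $q$ within the hemisphere, possibly preceded by an isometric rotation of $\mathbb{S}^{n+1}$ placing $M$ in a favourable position: the set of bad $q$ is cut out by an analytic family of codimension-one conditions parametrised by $p\in M$ and $i$, and genericity (or a more refined geometric argument) should let one avoid them all. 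This transversality-type step, which uses both the hemisphere hypothesis and the non-vanishing Gauss--Kronecker curvature in essential ways, is where the main difficulty sits.
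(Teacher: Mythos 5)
Your overall strategy --- contract $M$ towards a point of the hemisphere by a projectively/conformally natural one-parameter family of diffeomorphisms, show that the principal curvatures of the contracted hypersurface $M_t$ eventually dominate $\tan(R(M_t)/2)$, and invoke Theorem \ref{our rigidity} --- is exactly the paper's strategy (the paper uses the Beltrami central projection and the dilations $C_t=B^{-1}\circ H_t\circ B$ rather than stereographic projection, but that is a cosmetic difference). The problem is that your execution leaves the decisive step unproved, and the way you propose to fill it does not work. Your asymptotic $\lambda_i^S(M_t)\sim \lambda_i^E/(2t)$ only beats the threshold $\tan(R(M_t)/2)=tR_0$ at points where $\lambda_i^E\neq 0$; at a point where some Euclidean principal curvature of $\sigma(M)$ vanishes, the spherical curvature of $M_t$ is governed by the $O(t)$ correction term and is therefore of the \emph{same} order as the threshold, so the inequality can fail for all small $t$. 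Non-vanishing of the Gauss--Kronecker curvature of $M$ in $\mathbb{S}^{n+1}$ gives no control on the Euclidean principal curvatures of the stereographic image, so you genuinely need the transversality step you defer. But the genericity argument as sketched is not available: for each $(p,i)$ the bad poles form (at best) a hypersurface in the $(n+1)$-dimensional space of poles, and you must avoid the union over the $n$-dimensional family $p\in M$; this union is the projection of a $2n$-dimensional incidence set into an $(n+1)$-dimensional space, and for $n\geq 2$ a dimension count does not prevent it from having nonempty interior or even being everything. No Sard-type argument applies as stated, and the paper needs nothing of the sort.

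The idea you are missing is the one that makes the hypotheses bite: every compact hypersurface of $\mathbb{S}^{n+1}$ has an elliptic point (touch $M$ with the smallest enclosing geodesic sphere), and since the ordered principal curvatures are continuous and never zero (because $\det A\neq 0$) on the connected $M$, \emph{all} of them are everywhere positive for a suitable orientation --- you observed that each $\lambda_i^S$ has constant sign but did not conclude that the signs agree, which is the whole point. Positivity gives $\mathrm{II}_p(v)>0$ for every unit $v$, hence a uniform lower bound $\lambda>0$ on the smallest principal curvature. The paper then shows that the contraction multiplies the (normalised) second fundamental form by a factor $F_t(p,v)\geq K/t$ with $K>0$ uniform, so the min--max characterisation of eigenvalues yields $\mu_{j,t}\geq (K/t)\lambda_j\geq (K/t)\lambda\to\infty$; for small $t$ all principal curvatures of $M_t$ exceed $1\geq\tan(R(M_t)/2)$ and Theorem \ref{our rigidity} applies. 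This argument is pointwise and uniform, requires no choice of a good pole, and is where both the hemisphere hypothesis (for the uniform bound on $F_t$) and the non-vanishing Gauss--Kronecker curvature (for the sign argument) are actually used. To repair your write-up you should replace the genericity step by this elliptic-point/same-sign argument and then run your blow-up estimate only in the regime where all $\mathrm{II}_p(v)$ are uniformly positive.
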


\section{Gauss map} \label{gauss}

Let $\overline{M}^{n+1}$ be a Riemannian manifold. As stated in the Introduction, $\overline{M}$ admits a translational structure $\Gamma$ if and only if it is parallelisable, 
for if $\Gamma$ is given, choose a basis $\{ v_{1},..,v_{n+1} \}$ of $V$ and define $V_{i}(p)= \Gamma_{p}^{-1}(v_{i})$. Conversely, if $\overline{M}$ is parallelisable we may 
consider, after an orthonormalisation process, vector fields $V_{i}$ such that $\left\langle V_{i},V_{j} \right\rangle = \delta_{ij}$, $1 \leq i,j \leq n+1$. Then, choose a point
$p_{0}$ in $\overline{M}$, set $V = T_{p_{0}} \overline{M}$ and define

\begin{align*}
\Gamma_{p}(v) = \sum_{i=1}^{n+1} \left\langle v, V_{i}(p) \right\rangle V_{i}(p_{0}), \quad p \in \overline{M}, \, v \in T_{p}\overline{M}.
\end{align*}    

Let $\left( \overline{M}, \Gamma \right)$ be a translational Riemannian manifold and $f : M^n \to \overline{M}$ an immersion of an orientable manifold $M$ into $\overline{M}$. 
The following constructions are purely local, so we identify small neighbourhoods of $M$ with their images via $f$, and the tangent spaces to $M$ with their images via $Df$.
Let $\eta : M \to T \overline{M}$ be a unit normal vector field along $f$, and let $\mathbb{S}^n$ be the unit sphere of $V$. 

\begin{definition} The Gauss map $\gamma : M \to \mathbb{S}^n$ associated to the normal vector field $\eta$ is given by

\begin{align*}
\gamma(p) = \Gamma_p(\eta(p)), \quad p \in M.
\end{align*}
\end{definition}

The tangent space of $V$ at any point is canonically isomorphic to $V$, and via this isomorphism the tangent space of $\mathbb{S}^n$ at a point $x$ is just $\{x\}^{\perp}$.
Thus, the derivative $D \gamma(p)$ maps $T_p M$ into $T_{\gamma(p)} \mathbb{S}^n = \{\gamma(p)\}^{\perp}$ and $\Gamma_p^{-1}$ maps the latter back into $T_p M$. This makes 
possible the following: 

\begin{definition} The $\Gamma$-curvature of $M$ is the map $\kappa_{\Gamma} : M \to \mathbb{R}$ given by

\begin{align*}
\kappa_{\Gamma}(p) = \det \left( \Gamma_p^{-1} \circ D \gamma(p) \right), \quad p \in M.
\end{align*}
\end{definition}

Next, we define a special type of vector field that will play an important role.

\begin{definition} Given a vector $X \in T_p \overline{M}$, the vector field $\widetilde{X} \in \mathfrak{X}(\overline{M})$ defined by

\begin{align*}
\widetilde{X}(q) = \left( \Gamma_q^{-1} \circ \Gamma_p \right)(X), \quad q \in \overline{M}
\end{align*}

\noindent is called the $\Gamma$-invariant (or simply invariant) vector field of $\overline{M}$ associated with $X$.
\end{definition}

\begin{example}[The Euclidean translation] If $\overline{M} = \mathbb{R}^{n+1}$ and $\Gamma : T \mathbb{R}^{n+1} \to \mathbb{R}^{n+1} \times \mathbb{R}^{n+1}$ is the identity,
then the Gauss map $\gamma$ for an orientable hypersurface $M$ is the ordinary one. The invariant vector fields of $\mathbb{R}^{n+1}$ are the constant vector fields and
$\kappa_\Gamma$ is the Gauss-Kronecker curvature of $M$.
\end{example}

\begin{example}[Left translation on Lie groups] \label{Left translation} More generally, let $\overline{M} = G$ be a Lie group and $V = \mathfrak{g}$ be the Lie algebra of $G$,
considered as the tangent space of $G$ at the identity. Choose a left invariant metric for $G$ and define $\Gamma : TG \to G \times \mathfrak{g}$ by

\begin{align*}
\Gamma(g, v) = \left( g, DL_{g^-1}(g) \cdot v \right), \quad (g, v) \in TG,
\end{align*}

\noindent where $L_x : y \mapsto xy$ is the left translation. Here, the $\Gamma$-invariant vector fields are the left invariant vector fields of $G$. This is the setting studied in \cite{ripoll91}. 
\end{example}

\begin{example}[Parallel transport] \label{Parallel transport} Assume $\overline{M}$ is a Cartan-Hada\-mard manifold, that is, a complete, connected and simply connected Riemannian
manifold with nonpositive sectional curvature. Given a point $p_0\in \overline{M}$, the exponential map at $p_0$ is, by Hadamard's Theorem, a diffeomorphism from $T_{p_0} \overline{M}$ 
onto $\overline{M}$, so that every point $p$ can be joined to $p_0$ by a unique geodesic. Setting $V = T_{p_0} \overline{M}$, we may then define $\Gamma_p : T_p \overline{M} \to V$
by choosing $\Gamma_p(v)$ as being the parallel transport of $v \in T_p \overline{M}$ to $ T_{p_0}\overline{M}$ along this geodesic. Thus, the invariant vector fields here are the
parallel vector fields along the geodesic rays issuing from $p_0$.

More generally, given any complete Riemannian manifold $\overline{M}$ and a point $p_0$ in $\overline{M}$, we can define the parallel transport to $T_{p_0} \overline{M} $ on 
$\overline{M} \setminus C_{p_0}$ as above, where $C_{p_0}$ is the cut locus of $p_0$ (\cite{SY}, Chapter I). We study this case in detail on the sphere (Section \ref{sphere}).
\end{example}

We next describe the geometry of the Gauss map. Let $\overline{\nabla}$ be the Levi-Civita connection of $\overline{M}$. Recall that the shape operator of $M$ is the section
$A$ of the vector bundle $\operatorname{End}(TM)$ of endomorphisms of $TM$ given by

\begin{align*}
A_p(X) = - \overline{\nabla}_X \eta, \quad p \in M, \, X \in T_p M.
\end{align*}

\noindent Similarly, we define another section of $\operatorname{End}(TM)$, which depends additionally on the choice of the translation $\Gamma$.

\begin{definition} The invariant shape operator of $M$ is the section $\alpha$ of the bundle $\operatorname{End}(TM)$ given by

\begin{align*}
\alpha_p(X) = \overline{\nabla}_X \widetilde{\eta(p)}, \quad p \in M, \, X \in T_p M.
\end{align*}
\end{definition}

The proposition below establishes a relationship between $\gamma$ and the extrinsic geometry of $M$.

\begin{prop} \label{relationship} For any $p \in M$, the following identity holds:

\begin{align*}
\Gamma_p^{-1} \circ D \gamma(p) = - \left( A_p + \alpha_p \right).
\end{align*}
\end{prop}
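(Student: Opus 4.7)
The plan is to differentiate the definition $\gamma(q) = \Gamma_q(\eta(q))$ directly along a curve in $M$ and use the $\Gamma$-invariant extension of $\eta(p)$ to absorb the ``translational'' part of the derivative into $\alpha_p$. Fix $X \in T_pM$ and a curve $c:(-\varepsilon,\varepsilon)\to M$ with $c(0)=p$ and $c'(0)=X$. Let $W := \widetilde{\eta(p)}$; by the very definition of an invariant field, $\Gamma_q(W(q))$ equals the constant vector $\Gamma_p(\eta(p)) = \gamma(p) \in V$ for every $q \in \overline{M}$. Setting $Z := \eta - W$, so that $Z(p)=0$, linearity of each $\Gamma_q$ gives
\[
\gamma(c(t)) = \Gamma_{c(t)}\!\left(W(c(t))\right) + \Gamma_{c(t)}\!\left(Z(c(t))\right) = \gamma(p) + \Gamma_{c(t)}\!\left(Z(c(t))\right),
\]
so that $D\gamma(p)(X) = \left.\frac{d}{dt}\right|_{t=0}\Gamma_{c(t)}(Z(c(t)))$.

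The key technical step is then the following lemma: for any smooth vector field $Y$ on $\overline{M}$ with $Y(p)=0$,
\[
\left.\frac{d}{dt}\right|_{t=0}\Gamma_{c(t)}(Y(c(t))) = \Gamma_p\!\left(\overline{\nabla}_X Y\right).
\]
To prove it I would pick a local orthonormal frame $\{V_1,\dots,V_{n+1}\}$ around $p$, write $Y = \sum_i y_i V_i$ with $y_i(p)=0$, and expand $\Gamma_{c(t)}(Y(c(t))) = \sum_i y_i(c(t))\,\Gamma_{c(t)}(V_i(c(t)))$. Differentiating with the product rule produces two sums; the one involving the derivatives of the smooth $V$-valued maps $q \mapsto \Gamma_q(V_i(q))$ is multiplied by $y_i(p)=0$ and vanishes, leaving $\sum_i X(y_i)\,\Gamma_p(V_i(p))$. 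On the other hand, the same vanishing $y_i(p)=0$ collapses $\overline{\nabla}_X Y = \sum_i X(y_i)V_i(p) + \sum_i y_i(p)\overline{\nabla}_X V_i$ to $\sum_i X(y_i)V_i(p)$, and applying the linear isomorphism $\Gamma_p$ yields the claimed identity.

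Applying the lemma to $Z = \eta - W$ (extending $\eta$ to a tubular neighbourhood of $M$ if desired; since $Z(p)=0$ the value of $\overline{\nabla}_X Z$ is independent of the extension) and invoking the definitions of the shape operator and the invariant shape operator yields
\[
D\gamma(p)(X) = \Gamma_p\!\left(\overline{\nabla}_X \eta - \overline{\nabla}_X W\right) = -\Gamma_p\!\left(A_p(X) + \alpha_p(X)\right),
\]
and composing with $\Gamma_p^{-1}$ delivers the identity. The main obstacle is that $\Gamma$ is a priori unrelated to the Levi-Civita connection $\overline{\nabla}$, so there is no general rule for commuting the two; the entire point of writing $\eta = W + Z$ is that the incompatibility gets packaged into $\alpha_p$, while the residual computation involves only a vector field vanishing at $p$, where the distinction between the ``naive'' derivative along $c$ and the covariant derivative disappears.
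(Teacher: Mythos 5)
Your proposal is correct and is essentially the paper's argument in a lightly repackaged form: the paper expands $\eta$ in the invariant frame $\widetilde{X}_1,\dots,\widetilde{X}_{n+1}$ with $X_{n+1}=\eta(p)$, and the vanishing of the coefficients $a_i(p)$ for $i\le n$ plays exactly the role of your lemma on fields vanishing at $p$, while the $\widetilde{X}_{n+1}=\widetilde{\eta(p)}$ term produces $\alpha_p$ just as your $W$ does. Both proofs rest on the same two observations — invariant fields have constant $\Gamma$-image, and at a point where a field vanishes the covariant and ``naive'' derivatives agree — so there is nothing to add.
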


\begin{proof}
Fix $p \in M$ and an orthonormal basis $\{X_1, \dots, X_{n+1} \}$ of $T_p \overline{M}$ such that $X_{n+1} = \eta(p)$. The vector fields $\widetilde{X}_1, \dots, \widetilde{X}_{n+1}$ 
form a global orthonormal referential of $T \overline{M}$, so that we can write 

\begin{align} 
\eta = \sum_{i=1}^{n+1} a_i \widetilde{X}_i \label{normal}
\end{align}

\noindent for certain functions $a_i \in C^{\infty}(M)$. Notice that $a_i(p) = 0$ for $i \in \{1, \dots, n \}$ and $a_{n+1}(p) = 1$.

For $y \in M$ we have

\begin{align*}
\gamma(y) = \Gamma_y(\eta(y)) = \Gamma_y \left(\sum_{i=1}^{n+1} a_i(y) \widetilde{X}_i (y) \right) = \sum_{i=1}^{n+1} a_i(y) \Gamma_p(X_i).
\end{align*}

\noindent Therefore, if $X \in T_p M$,

\begin{align} 
\Gamma_p^{-1}(D \gamma(p) \cdot X) = \Gamma_p^{-1} \left( \sum_{i=1}^{n+1} X(a_i) \Gamma_p(X_i) \right) = \sum_{i=1}^{n+1} X(a_i) X_i. \label{composta} 
\end{align} 

\noindent From (\ref{normal}) and (\ref{composta}) we obtain

\begin{align*}
-A_p(X) &= \overline{\nabla}_X \eta
         = \sum_{i=1}^{n+1} \overline{\nabla}_X(a_i \widetilde{X}_i)
         = \sum_{i=1}^{n+1} \left[ a_i(p) \overline{\nabla}_X \widetilde{X}_i + X(a_i) \widetilde{X}_i (p) \right]  \\ 
        &= \overline{\nabla}_X \widetilde{X}_{n+1} + \sum_{i=1}^{n+1} X(a_i) X_i = \alpha_p(X) + \Gamma_p^{-1}(D \gamma(p) \cdot X),
\end{align*}

\noindent which gives the desired result.
\end{proof}

We now provide the proof for our Gauss-Bonnet theorem.

\begin{proof}[Proof of Theorem \ref{gaussbonnet}]
Let $\sigma$ be the volume form of $\mathbb{S}^n$ induced by the metric on $V$. From the fact that $\Gamma$ restricts to isometries in each fibre and from the definition of
$\kappa_\Gamma$, it follows that $\gamma^{*} \sigma = \kappa_\Gamma \, \omega$. Then, the change of variables formula yields 

\begin{align*}
\int_M \kappa_\Gamma \, \omega = \int_M \gamma^{*} \sigma = \operatorname{deg}(\gamma) \int_{\mathbb{S}^n} \sigma = c_n \operatorname{deg}(\gamma).
\end{align*}

\noindent It remains to show that $\operatorname{deg}(\gamma) = \frac{1}{2} \chi(M)$. For this, define $\widetilde{\Gamma} : TM \to T \mathbb{S}^n$ by

\begin{align*}
\widetilde{\Gamma}(p, v) = \left( \gamma(p), \Gamma_p(v) \right), \quad (p, v) \in TM.
\end{align*}

\noindent We have the following vector bundle map

\begin{align*}
\begin{CD}
TM @>\tilde{\Gamma}>> T \mathbb{S}^n\\
@VVV @VVV\\
M @>\gamma>> \mathbb{S}^n
\end{CD}
\end{align*}

\noindent Therefore, if $e(M)$ and $e(\mathbb{S}^n)$ are the Euler classes of $M$ and $\mathbb{S}^n$, and since $n$ is even, we obtain
\begin{align*}
\chi(M) &= \left( e(M), [M] \right) = \left( \gamma^{*}(e(\mathbb{S}^n)), [M] \right) \\
        &= \left( e(\mathbb{S}^n), \gamma^{*}([M]) \right) = \left( e(\mathbb{S}^n), \operatorname{deg}(\gamma)[\mathbb{S}^n] \right) \\
        &= \operatorname{deg}(\gamma) \left( e(\mathbb{S}^n), [\mathbb{S}^n] \right) = \operatorname{deg}(\gamma) \chi(\mathbb{S}^n) = 2 \operatorname{deg}(\gamma),
\end{align*}
where $[\,\cdot\,]$ indicates fundamental class in homology and $(\cdot, \cdot)$ the duality between homology and cohomology.
\end{proof}
 
\section{Topological rigidity of hypersurfaces of the sphere} \label{sphere}

In this section we will investigate the earlier constructions in the following situation. Let $\overline{M}$ be the unit sphere $\mathbb{S}^{n+1} \subset \mathbb{R}^{n+2}$ with 
a point $-p_0$ deleted, which we will denote by $\mathbb{S}_{-p_0}^{n+1}$, and let $V$ be the tangent space of the sphere at $p_0$. The metrics of $\mathbb{S}^{n+1}$ and $V$ are 
those induced from $\mathbb{R}^{n+2}$. Given two non-antipodal points $p, q$ in the sphere, let $\tau_p^q : T_p\mathbb{ S}^{n+1} \to T_q \mathbb{ S}^{n+1}$ be the parallel 
transport along the unique geodesic joining $p$ to $q$ (we agree that $\tau_p^p$ is the identity of $T_p \mathbb{S}^{n+1}$). Since this map is a linear isometry, we define 
$\Gamma : T \mathbb{S}_{-p_0}^{n+1} \to \mathbb{S}_{-p_0}^{n+1} \times V$ by

\begin{align*}
\Gamma(p, v) = \left( p, \tau_p^{p_0}(v) \right), \quad (p, v) \in  T \mathbb{S}_{-p_0}^{n+1}.
\end{align*}

If $M^n$ is an orientable immersed hypersurface of $\mathbb{S}^{n+1}$ not containing $-p_0$ and $\eta : M \to \mathbb{R}^{n+2}$ is a unit normal vector field (tangent to the sphere),
the Gauss map $\gamma : M \to \mathbb{S}^n$ calculated at a point $p$ is just the parallel transport of the normal $\eta(p)$ to $M$ along the geodesic joining $p$ to $p_0$. The next 
proposition contains the relevant information we will need.

\begin{prop} Let $p$ and $q$ be non-antipodal points in $\mathbb{S}^{n+1}$. With the above notations, the following formulae hold:

\begin{itemize}
\item[(i)] 
\begin{align*}
\tau_{p}^q(v) = - \left[ \frac{\langle v, q \rangle}{1 + \langle q, p \rangle} \right] (q + p) + v, \quad v \in T_p \mathbb{S}^{n+1}. 
\end{align*}
\item[(ii)]
\begin{align*}
\gamma(p) = - \left[ \frac{\langle \eta(p), p_0 \rangle}{1 + \langle p, p_0 \rangle} \right] (p + p_0) + \eta(p). 
\end{align*}
\item[(iii)]
\begin{align*}
\alpha_p(X) = \left[ \frac{\langle \eta(p), p_0 \rangle}{1 + \langle p, p_0 \rangle} \right] X, \quad X \in T_p M.
\end{align*}
\end{itemize}
\end{prop}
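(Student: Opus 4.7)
My plan is to establish the three identities in order, reducing (ii) and (iii) to (i) plus a short computation in the ambient $\mathbb{R}^{n+2}$.

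For (i), I would parameterise the minimising geodesic from $p$ to $q$ as $c(t) = \cos(t) p + \sin(t) w$, where $w \in T_p \mathbb{S}^{n+1}$ is a unit vector and $L = \arccos\langle p, q\rangle \in (0,\pi)$ is the distance, so that $c(L) = q$ and $c'(L) = -\sin(L) p + \cos(L) w$. Decomposing $v = a w + v_\perp$ with $a = \langle v, w\rangle$ and $v_\perp$ orthogonal to the plane $\mathrm{span}(p,w)$, parallel transport along a great circle is transparent in $\mathbb{R}^{n+2}$: the component $v_\perp$ is preserved as an ambient vector, while $w$ is carried to $c'(L)$. Thus $\tau_p^q(v) - v = a\bigl(-\sin(L) p + (\cos(L)-1)w\bigr)$, and using $q + p = (1+\cos L) p + \sin(L) w$ together with $\sin^2 L = (1-\cos L)(1+\cos L)$ and $\langle v, q\rangle = a \sin(L)$, this rearranges to the claimed identity.

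Part (ii) is then a direct application of (i) to $v = \eta(p)$ and $q = p_0$, since by definition $\gamma(p) = \tau_p^{p_0}(\eta(p))$.

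For (iii), the invariant vector field $\widetilde{\eta(p)}$ is exactly the parallel transport of $u := \gamma(p)$ from $p_0$ along the geodesic to $q$, so by (i) (with $p$ replaced by $p_0$),
\begin{align*}
\widetilde{\eta(p)}(q) = u - \frac{\langle u, q\rangle}{1 + \langle q, p_0\rangle}(q + p_0).
\end{align*}
I would then compute $\overline{\nabla}_X \widetilde{\eta(p)}$ at $p$ through the Gauss formula for $\mathbb{S}^{n+1} \subset \mathbb{R}^{n+2}$, namely $\overline{\nabla}_X Y = D_X Y - \langle D_X Y, p\rangle p$ for tangent $Y$. Two short identities, both obtained by pairing the formula in (ii) with $p$ and with $X \in T_p M$, are needed: $\langle u, p\rangle = -\langle \eta(p), p_0\rangle$ and $\langle u, X\rangle = -\frac{\langle \eta(p), p_0\rangle \langle p_0, X\rangle}{1 + \langle p, p_0\rangle}$. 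Writing $g(q) = \langle u, q\rangle / (1 + \langle q, p_0\rangle)$, these two identities make the numerator of the quotient-rule expression for $D_X g$ at $p$ vanish, giving $D_X g = 0$. Hence the Euclidean derivative reduces to $D_X \widetilde{\eta(p)}|_p = -g(p) X$, and since $X \perp p$ the tangential projection is trivial. Substituting $-g(p) = \langle \eta(p), p_0\rangle/(1 + \langle p, p_0\rangle)$ gives the formula.

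The main technical step is the cancellation $D_X g = 0$ in (iii); it is precisely this cancellation that forces $\alpha_p$ to be a scalar multiple of the identity, which in turn is what will allow the translational curvature to factor so cleanly in the applications of Section \ref{sphere}.
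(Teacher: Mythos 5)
Your proposal is correct, and parts (ii) and (iii) follow the paper's own argument almost exactly: (ii) is the same one-line specialisation of (i), and in (iii) your cancellation $D_Xg=0$ is precisely the identity $-\langle u,X\rangle(1+\langle p,p_0\rangle)+\langle u,p\rangle\langle X,p_0\rangle=0$ that the paper extracts from $\langle X,\eta(p)\rangle=0$ to kill the $(p+p_0)$-component of $\widetilde{\nabla}_X\widetilde{v}$, followed by the same evaluation $\langle u,p\rangle=-\langle\eta(p),p_0\rangle$. The only genuine divergence is in (i): the paper writes down the parallel-transport equation $X'-\langle X',\beta\rangle\beta\equiv 0$ along $\beta(t)=(\cos t)p+(\sin t)\overline{q}$, uses the constancy of $\langle X,\beta'\rangle$ to integrate it explicitly, and evaluates at $t_q$; you instead invoke the geometric description of parallel transport along a great circle as a rotation in the plane $\mathrm{span}(p,w)$ that fixes the orthogonal complement pointwise, and then verify the claimed formula via $\sin^2L=(1-\cos L)(1+\cos L)$ and $\langle v,q\rangle=a\sin L$. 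Your route is shorter and more conceptual but quietly assumes the rotation description (which is itself proved by exactly the ODE argument the paper carries out, or by noting that the constant extension of $v_\perp$ stays tangent and has vanishing ambient derivative); the paper's route is self-contained at the cost of solving the ODE by hand. Both computations check out and yield the same formula.
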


\begin{proof}
To prove the first two items, let $\beta : [0, t_q] \to \mathbb{S}^{n+1}$ be the unit speed geodesic joining $p$ to $q$, given by

\begin{align*}
\beta(t) = (\cos t) p + (\sin t) \overline{q}, \quad t \in [0, t_q],
\end{align*}

\noindent where 

\begin{align*}
\overline{q} = \frac{q - \langle q, p \rangle p}{\norm{q - \langle q, p \rangle p}} = \frac{q - \langle q, p \rangle p}{\sqrt{1 - \langle q, p \rangle^2}}.
\end{align*}

For fixed $v \in T_p \mathbb{S}^{n+1}$, let $X : [0, t_q] \to \mathbb{R}^{n+2}$ be the parallel vector field along $\beta$ and tangent to the sphere with prescribed 
initial value $X(0) = v$. Differentiating $\langle X, \beta \rangle \equiv 0$, we obtain

\begin{align}
- \langle X', \beta \rangle \equiv \langle X, \beta' \rangle, \label{X}
\end{align}

\noindent and since $X$ and $\beta'$ are parallel along $\beta$, $\langle X, \beta' \rangle$ is constant, equal to $C \in \mathbb{R}$, say, with

\begin{align*}
C = \langle X(0), \beta'(0) \rangle = \langle v, \overline{q} \rangle = \frac{\langle v, q \rangle}{\sqrt{1- \langle q, p \rangle^2}}. 
\end{align*}

The equation for $X$ to be a parallel vector field is $X' - \langle X', \beta \rangle \beta \equiv 0$. Writing $X = (x_1, \dots, x_{n+2})$, using (\ref{X}) and 
the expression for $\beta$, we have

\begin{align*} 
X'(t) = -C \left[ (\cos t) p + (\sin t) \overline{q} \right], \quad t \in [0, t_q].
\end{align*}

\noindent The solution satisfying $X(0) = v$ is then 

\begin{align*}
X(t) = C \left[ (\cos t - 1) \overline{q} - (\sin t) p  \right] + v, \quad t \in [0, t_q].
\end{align*}

\noindent Noticing that $\cos t_q = \langle q, p \rangle$ and $\sin t_q = \sqrt{1- \langle q, p \rangle^2}$, we finally obtain

\begin{align*}
\tau_{p}^q(v) = X(t_p) = - \left[ \frac{\langle v, q \rangle}{1 + \langle q, p \rangle} \right] (q + p) + v
\end{align*}

\noindent and

\begin{align*}
\gamma(p) = \tau_p^{p_0}(\eta(p)) = - \left[ \frac{\langle \eta(p), p_0 \rangle}{1 + \langle p, p_0 \rangle} \right] (p + p_0) + \eta(p), 
\end{align*}

\noindent as required.

For the last item, let $v = \gamma(p)$. Recall that $\widetilde{v} \in \mathfrak{X}\left( \mathbb{S}_{-p_0}^{n+1} \right)$ is the invariant vector field associated with $v$, and 
$\widetilde{v} = \widetilde{\eta(p)}$. From (i) we have

\begin{align}
\widetilde{v}(q) = \tau_{p_0}^q(v) =  - \left[ \frac{\langle v, q \rangle}{1 + \langle q, p_0 \rangle} \right] (q + p_0) + v, \quad q \in \mathbb{S}_{-p_0}^{n+1}. \label{v}
\end{align}

\noindent If $\widetilde{\nabla}$ denotes the Riemannian connection of $\mathbb{R}^{n+2}$, then

\begin{align*}
\alpha_p(X) = \overline{\nabla}_X \widetilde{v} = \widetilde{\nabla}_X \widetilde{v} - \langle \widetilde{\nabla}_X \widetilde{v}, p \rangle p, \quad X \in T_p M.
\end{align*}

A straightforward calculation shows that

\begin{align}
\widetilde{\nabla}_X \widetilde{v} = \left[ \frac{-\langle v, X \rangle (1 + \langle p, p_0 \rangle) + \langle v, p \rangle \langle X, p_0 \rangle}{(1 + \langle p, p_0 \rangle)^2} \right] &(p + p_0) - \left[ \frac{\langle v, p \rangle}{1 + \langle p, p_0 \rangle} \right] X. \label{v na direcao X}
\end{align}

\noindent Notice that $\langle X, \widetilde{v}(p) \rangle = \langle X, \eta(p) \rangle = 0$, since $X \in T_p M$. Using the expression of $\widetilde{v}$ in (\ref{v}), we have 

\begin{align*}
-\langle v, X \rangle (1 + \langle p, p_0 \rangle) + \langle v, p \rangle \langle X, p_0 \rangle = 0.
\end{align*}

\noindent Substituting this in (\ref{v na direcao X}), we obtain

\begin{align*}
\widetilde{\nabla}_X \widetilde{v} &= - \left[ \frac{\langle v, p \rangle}{1 + \langle p, p_0 \rangle} \right] X.
\end{align*}

\noindent Then, using formula (ii) for $\gamma(p)$,

\begin{align*}
\langle v, p \rangle &= \langle \gamma(p), p \rangle 
                      = - \left[ \frac{\langle \eta(p), p_0 \rangle}{1 + \langle p, p_0 \rangle} \right] \langle p + p_0, p \rangle + \langle \eta(p), p \rangle 
                      = - \langle \eta(p), p_0 \rangle.
\end{align*}

\noindent Hence, 

\begin{align}
\alpha_p(X) = \widetilde{\nabla}_X \widetilde{v}             
            = \left[ \frac{\langle \eta(p), p_0 \rangle}{1 + \langle p, p_0 \rangle} \right] X, \quad X \in T_p M.
\end{align}

\noindent since $\widetilde{\nabla}_X \widetilde{v}$ is already tangent to the sphere at $p$.
\end{proof}
 
Before we prove Theorem \ref{our rigidity}, we need the following lemma:
 
\begin{lem} \label{lemma} For a parameter $r \in (0,1)$, let
 
\begin{align*}
M_r = \mathbb{S}^1(r) \times \mathbb{S}^{n-1}(s) = \left\lbrace (x,y) \in \mathbb{R}^2 \times \mathbb{R}^n : \norm{x} = r, \norm{y} = s \right\rbrace \subset \mathbb{S}^{n+1},
\end{align*}

\noindent where $s = \sqrt{1 - r^2}$. If $R$ is the radius of the largest open geodesic ball of $\mathbb{S}^{n+1}$ which does not intersect $M_r$, then

\begin{align*}
\cos R = \min \{ r, s \}.
\end{align*}
\end{lem}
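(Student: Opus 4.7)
The plan is to compute the distance from an arbitrary point $q \in \mathbb{S}^{n+1}$ to $M_r$, and then maximise this distance over $q$. Since $M_r$ is compact and $d(\,\cdot\,, M_r)$ is continuous on the compact manifold $\mathbb{S}^{n+1}$, the radius of the largest open geodesic ball disjoint from $M_r$ is precisely
\begin{align*}
R \;=\; \max_{q \in \mathbb{S}^{n+1}} d(q, M_r),
\end{align*}
and it is attained. In particular, it suffices to identify this maximum.

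First I would reduce the problem to a two-variable optimisation. Write an arbitrary $q \in \mathbb{S}^{n+1}$ as $q = (a, b) \in \mathbb{R}^2 \times \mathbb{R}^n$ with $\|a\|^2 + \|b\|^2 = 1$, and set $\alpha = \|a\|$, $\beta = \|b\|$, so $\alpha^2 + \beta^2 = 1$ and $\alpha, \beta \geq 0$. For $p = (x,y) \in M_r$, the geodesic distance in $\mathbb{S}^{n+1}$ is $\arccos \langle q, p \rangle = \arccos\bigl( \langle a, x \rangle + \langle b, y \rangle \bigr)$. Since $\|x\| = r$ and $\|y\| = s$, the Cauchy--Schwarz inequality yields
\begin{align*}
\langle a, x \rangle + \langle b, y \rangle \;\leq\; r \alpha + s \beta,
\end{align*}
with equality when $x = (r/\alpha)\, a$ (if $\alpha > 0$) and $y = (s/\beta)\, b$ (if $\beta > 0$), and these choices do lie in $M_r$. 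Thus
\begin{align*}
d(q, M_r) \;=\; \arccos\bigl( r\alpha + s\beta \bigr).
\end{align*}

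It then remains to minimise $r\alpha + s \beta$ subject to $\alpha^2 + \beta^2 = 1$ and $\alpha, \beta \geq 0$. Parameterising $\alpha = \cos\theta$, $\beta = \sin\theta$ for $\theta \in [0, \pi/2]$, and using that $r^2 + s^2 = 1$, one has $r\cos\theta + s\sin\theta = \cos(\theta - \phi)$ where $\cos\phi = r$ and $\sin\phi = s$. The function $\cos(\theta - \phi)$ on $[0, \pi/2]$ attains its minimum at an endpoint, with values $r$ and $s$ respectively, so the minimum equals $\min\{r, s\}$. Consequently $R = \arccos(\min\{r,s\})$, i.e.\ $\cos R = \min\{r, s\}$.

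The only subtlety is to justify that the minimum of $r\alpha + s \beta$ on the quarter arc is indeed at a boundary point; I would simply observe (as above) that after a rotation the function is a single cosine, or equivalently invoke that a non-constant linear function on an arc of a circle attains its extrema at the endpoints of the arc. No other step looks delicate: the rest is the packaging of Cauchy--Schwarz and the $O(2) \times O(n)$-symmetry of $M_r$, which is already encoded in the reduction $q \mapsto (\alpha, \beta)$.
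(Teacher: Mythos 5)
Your proof is correct and follows essentially the same route as the paper's: both reduce to computing $\sup_{q \in M_r} \langle p, q \rangle = r\norm{x} + s\norm{y}$ via Cauchy--Schwarz and then minimising $r\alpha + s\beta$ over the unit quarter circle. The only difference is that you spell out the final minimisation (via the phase-shift $r\cos\theta + s\sin\theta = \cos(\theta - \phi)$), which the paper states without justification.
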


\begin{proof}
Recall that the distance between two points $p,q$ in the sphere $\mathbb{S}^{n+1}$ is given by $\arccos \langle p, q \rangle$, so that 

\begin{align*}
\cos R = \inf \left\lbrace \sup \{ \langle p, q \rangle : q \in M_r \} : p \in \mathbb{S}^{n+1} \right\rbrace.
\end{align*}

\noindent Writing $p = (x,y) \in \mathbb{R}^2 \times \mathbb{R}^n$, we have

\begin{align*}
\sup \{ \langle p, q \rangle : q \in M_r \} &= \sup \{ \langle x, u \rangle + \langle y, v \rangle : (u, v) \in M_r \} \\
&= r \norm{x} + s \norm{y}.
\end{align*}

\noindent Thus, 

\begin{align*}
\cos R = \inf \left\lbrace r \norm{x} + s \norm{y} : (x,y) \in \mathbb{S}^{n+1} \right\rbrace = \min \{ r, s \}.
\end{align*}
\end{proof}
 
\begin{proof}[Proof of Theorem \ref{our rigidity}]
Let $\eta : M \to \mathbb{R}^{n+2}$ be the unit normal vector field which gives rise to the orientation of $M$ and let $p_0$ be the center of a geodesic ball of radius $R$ 
containing $M$. Define a function $c : M \to \mathbb{R}$ by

\begin{align*}
c(p) = \frac{\langle \eta(p), p_0 \rangle}{1 + \langle p, p_0 \rangle}, \quad p \in M
\end{align*}

\noindent and a vector field $E \in \mathfrak{X}(\mathbb{S}^{n+1})$ by

\begin{align*}
E(p) = p_0 - \langle p, p_0 \rangle p, \quad p \in M.
\end{align*}

\noindent Notice that $\langle \eta(p), E(p) \rangle = \langle \eta(p), p_0 \rangle$ for $p$ in $M$. Then, using Cauchy-Schwarz inequality, we have the following estimate
for $c$:

\begin{align*}
|c(p)| \leq \frac{\norm{\eta(p)} \norm{E(p)}}{1 + \langle p, p_0 \rangle} = \frac{\sqrt{ 1 - \langle p, p_0 \rangle^2 }}{1 + \langle p, p_0 \rangle} = 
\sqrt{ \frac{ 1 - \langle p, p_0 \rangle }{1 + \langle p, p_0 \rangle} }, \quad \forall \, p \in M.
\end{align*}

\noindent Thus,

\begin{align*}
|c(p)| \leq \sqrt{ \frac{ 1 - \cos d(p, p_0) }{1 + \cos d(p, p_0)} } = \tan \left( \frac{d(p, p_0)}{2} \right) \leq \tan \left( \frac{R}{2} \right), \quad \forall \, p \in M.
\end{align*}

Let $p \in M$. Choosing an orthonormal basis of $T_p M$ that diagonalises the shape operator $A_p$, the matrix of $\Gamma_p^{-1} \circ D \gamma(p)$ with respect to this
basis is diagonal with entries $\lambda_i(p) + c(p) \neq 0$. Therefore, this map is an isomorphism for each $p \in M$, and so is $D\gamma(p)$. Since $M$ is compact, $\gamma$
is a covering map, and since $M$ is connected with $n \geq 2$, $\gamma$ is a diffeomorphism.

For the second part, let $\varepsilon \in (0, \sqrt{2} - 1)$. We will show that it is possible to choose $r \in I = \left( 0, \tfrac{1}{\sqrt{2}} \Big] \right.$ so that 
the principal curvatures of the hypersurface $M_r \subset\mathbb{S}^{n+1}$ from Lemma \ref{lemma} satisfy $(\ref{epsilon_inequality})$.

For any $r \in (0,1)$, the principal curvatures $\lambda_i$ of $M_r$ are constant, with

\begin{align*}
\lambda_1 = -\frac{\sqrt{1-r^2}}{r}
\end{align*}

\noindent and 

\begin{align*}
\lambda_2 =  \cdots = \lambda_n = \frac{r}{\sqrt{1-r^2}}.
\end{align*}

\noindent If $r \in I$ then $r \leq \sqrt{1-r^2}$ and, according to Lemma \ref{lemma}, $\cos R = r$. A simple calculation then shows that $(\ref{epsilon_inequality})$ holds
if and only if $r \in J_\varepsilon = \left( \frac{\varepsilon}{1 - \varepsilon}, \frac{1}{1 + \varepsilon}  \right)$. Since $\varepsilon \in (0,\sqrt{2} - 1)$, we have 
$J_\varepsilon \neq \emptyset$ and $I \cap J_\varepsilon \neq \emptyset$. Thus, any $r$ in this intersection is suitable for our purposes.
\end{proof}

In order to prove Theorem \ref{xia rigidity}, we start by introducing some ingredients and notations. Let $p_0$ be the north pole of $\mathbb{S}^{n+1}$ and let $\mathbb S_+^{n+1}$
be the open hemisphere centered at $p_0$. The Beltrami map $B : S_+^{n+1} \to \mathbb{R}^{n+1} \approx T_{p_0}\mathbb S^{n+1}$ is the diffeomorphism obtained by central projection.
Explicitly, it is given by

\begin{align*}
B(p) = \left( \frac{p_1}{p_{n+2}}, \dots, \frac{p_{n+1}}{p_{n+2}} \right), \quad p = (p_1, \dots p_{n+2}) \in S_+^{n+1}.
\end{align*} 

For $t > 0$, let $H_t : \mathbb{R}^{n+1} \to \mathbb{R}^{n+1}$ be the homothety $x \mapsto tx$. The map we are interested in is $C_t = B^{-1} \circ H_t \circ B$. It can be shown 
that

\begin{align*}
C_t(p) = \frac{m_t(p)}{\norm{m_t(p)}}, \quad p \in S_+^{n+1},
\end{align*}

\noindent where $m_t : S_+^{n+1} \to \mathbb{R}^{n+2} \setminus \{0\}$ is defined by

\begin{align}
m_t(p) = \left( p_1, \dots, p_{n+1}, \frac{p_{n+2}}{t} \right), \quad p = (p_1, \dots p_{n+2}) \in S_+^{n+1}. \label{m_t}
\end{align}
 
\noindent Some long but easy calculations yield

\begin{align*} 
DC_t(p) \cdot v = \frac{1}{\norm{m_t(p)}} \left\lbrace \left[ \frac{(t-1) \langle v, p_0 \rangle}{t^2 \norm{m_t(p)}^2} \right] \left[(t+1) \langle p, p_0 \rangle p - t p_0 \right] + v  \right\rbrace, 
\end{align*}

\noindent for $(p, v) \in T S_+^{n+1}$.

Let $M$ be an oriented hypersurface of $\mathbb{S}^{n+1}$ with unit normal vector field $\eta : M \to \mathbb{R}^{n+2}$. Recall that the second fundamental form of $M$ at
a point $p$ (in the direction of $\eta$) is the quadratic form $\mathrm{II}_p : T_p M \to \mathbb{R}$ induced by the shape operator $A_p$, that is,

\begin{align*}
\mathrm{II}_p(v) = \langle A_p(v), v \rangle, \quad v \in T_p M.
\end{align*}

\noindent Alternatively, if $\alpha : (-\varepsilon, \varepsilon) \to M$ is a curve with $\alpha(0) = p$ and $\alpha'(0) = v$, then

\begin{align*}
\mathrm{II}_p(v) = \langle \alpha''(0), \eta(p) \rangle,
\end{align*}

\noindent where the double prime indicates the usual second derivative, regarding $\alpha$ as a curve in $\mathbb{R}^{n+2}$.

\begin{proof} [Proof of Theorem \ref{xia rigidity}] After a rotation, we may suppose $M$ is contained in $\mathbb{S}_+^{n+1}$. By Theorem \ref{our rigidity} (with
$R = \tfrac{\pi}{2}$), $M$ would be diffeomorphic to $\mathbb{S}^n$ if all its principal curvatures were bigger that $1$ in absolute value. This is not necessarily true.
However, defining $M_t = C_t(M)$, we will show that if $t$ is sufficiently small, then this bound on the principal curvatures holds for $M_t$.

Let $\eta : M \to \mathbb{R}^{n+2}$ be the unit normal vector field (tangent do the sphere) that induces the orientation of $M$. One can directly check that the vector 
field $\eta_t : M_t \to \mathbb{R}^{n+2}$ given by

\begin{align}
\eta_t(C_t(p)) = \frac{\eta(p) + (t-1) \langle \eta(p), p_0 \rangle p_0}{ \sqrt{1 + (t^2 - 1) \langle \eta(p), p_0 \rangle^2}}, \quad p \in M \label{eta_t},
\end{align}

\noindent is normal to $M_t$; it has unit length because the denominator is the norm of the numerator.

We will establish a relationship between the second fundamental forms $\mathrm{II}$ and $\mathrm{II}^t$ of $M$ and $M_t$ with respect to the normals $\eta$ and $\eta_t$. 
Let $\alpha : (-\varepsilon, \varepsilon) \to M$ be a curve with $\alpha(0) = p$ and $\alpha'(0) = v$, with $\norm{v} = 1$. Consider $\beta = C_t \circ \alpha$ the 
corresponding curve in $M_t$, with $\beta(0) = q$ and $\beta'(0) = w$. 

Introducing the functions $y_t, z_t : M \to \mathbb{R}$ given by

\begin{align*}
y_t(p) &= \frac{(t+1) \langle p, p_0 \rangle }{t \norm{m_t(p)} }, \quad p \in M
\end{align*}

\noindent and 

\begin{align*}
z_t(p) &= \frac{1}{ \norm{m_t(p)} }, \quad p \in M,
\end{align*}

\noindent one has, after rearranging,

\begin{align*}
\beta'(s) = z_t(\alpha(s)) \left\lbrace \left[ \frac{(t-1) \langle \alpha'(s), p_0 \rangle}{t} \right] \left[ y_t(\alpha(s)) \beta(s) - p_0 \right] + \alpha'(s) \right\rbrace.
\end{align*}

\noindent Differentiating this expression and evaluating at $s = 0$, we obtain

\begin{align*}
\beta''(0) = \left( Dz_t(p) \cdot v \right) \norm{m_t(p)} w + z_t(p) \left\lbrace \left[ \frac{(t-1) \langle \alpha''(0), p_0 \rangle }{t} \right] \left[ y_t(p) q - p_0 \right] \right. &\\
 + \left. \left[ \frac{(t-1) \langle v, p_0 \rangle }{t} \right] \left[ \left( Dy_t(p) \cdot v \right)q + y_t(p) w \right] + \alpha''(0) \right\rbrace&.
\end{align*}

\noindent Since $\langle q, \eta_t(q) \rangle = \langle w, \eta_t(q) \rangle = 0$, we have

\begin{align*}
\langle \beta''(0), \eta_t(q) \rangle = z_t(p) \left[ \langle \alpha''(0), \eta_t(q) \rangle - \frac{(t-1) \langle \alpha''(0), p_0 \rangle \langle \eta_t(q), p_0 \rangle}{t} \right].
\end{align*}

\noindent Using expression (\ref{eta_t}) for $\eta_t$ we arrive at

\begin{align*}
\mathrm{II}_q^t(w) = \langle \beta''(0), \eta_t(q) \rangle = \frac{\mathrm{II}_p(v)}{\norm{m_t(p)} \left[1 + (t^2-1) \langle \eta(p), p_0 \rangle^2 \right]^{1/2}}.
\end{align*}

\noindent Furthermore, 

\begin{align*}
\norm{w}^2= \frac{1}{\norm{m_t(p)}^2} \left[ \frac{(1-t^2)(\langle p, p_0 \rangle^2 + \langle v, p_0 \rangle^2) + t^2}{t^2 \norm{m_t(p)}^2} \right].
\end{align*}

\noindent Thus, these two last equations and the value of $\norm{m_t(p)}$ obtainable from (\ref{m_t}) yield the desired relationship between $\mathrm{II}_p$ and $\mathrm{II}_q^t$:

\begin{align*}
\mathrm{II}_q^t \left( \frac{w}{\norm{w}} \right) = F_t(p,v) \mathrm{II}_p(v), 
\end{align*}

\noindent where 

\begin{align*}
F_t(p,v) = \frac{\left[ (1-t^2) \langle p, p_0 \rangle^2 + t^2 \right]^{3/2}}{t \left[ (1-t^2) (\langle p, p_0 \rangle^2 + \langle v, p_0 \rangle^2) + t^2 \right] 
\left[ 1 + (t^2-1) \langle \eta(p), p_0 \rangle^2 \right]^{1/2}}.
\end{align*}

Since $M$ is compact and contained in $\mathbb{S}_+^{n+1}$ we may choose $h, \varepsilon \in (0,1)$ such that $\langle x, p_0 \rangle^2 \geq h$  and $\langle \eta(x), p_0 \rangle^2 < 1 - \varepsilon^2$ for all $x \in M$. We have the following estimates if $0 < t < \tfrac{1}{\sqrt{2}}$:

\begin{gather*}
(1-t^2) \langle p, p_0 \rangle^2 + t^2 \geq \frac{h}{2} \\
(1-t^2) (\langle p, p_0 \rangle^2 + \langle v, p_0 \rangle^2) + t^2 \leq 3 \\
1 + (t^2 - 1) \langle \eta(p), p_0 \rangle^2 \leq 1.
\end{gather*}

\noindent This way, 

\begin{align*}
F_t(p,v) \geq \frac{K}{t}, \quad \forall \, p \in M, \; \forall \, v \in T_p M, \, \norm{v} = 1,
\end{align*}

\noindent for $K = \tfrac{h^{3/2}}{6 \sqrt{2}}$.

Let $\lambda_1 \leq \cdots \leq \lambda_n$ and $\mu_{1,t} \leq \cdots \leq \mu_{n,t}$ be the principal curvatures of $M$ and $M_t$, respectively. The variational principle
for eigenvalues gives

\begin{align*}
\lambda_j(p) = \min \left\lbrace \max \left\lbrace \mathrm{II}_p(v) : v \in V, \, \norm{v} = 1 \right\rbrace : V \subseteq T_p M, \, \dim V = j \right\rbrace
\end{align*}

\noindent and

\begin{align*}
\mu_{j,t}(C_t(p)) = \min \left\lbrace \max \left\lbrace F_t(p,v) \mathrm{II}_p(v) : v \in V, \, \norm{v} = 1 \right\rbrace : V \subseteq T_p M, \, \dim V = j \right\rbrace.
\end{align*}

\noindent Notice that $M$ must contain an elliptic point, that is, a point where all principal curvatures have the same sign, which we assume to be positive. The connectedness 
of $M$ and the fact that its Gauss-Kronecker curvature is nowhere zero implies all principal curvatures are everywhere positive. So, for every point $p \in M$ and subspace 
$V$ of $T_p M$, we have

\begin{align*}
\max \left\lbrace F_t(p,v) \mathrm{II}_p(v) : v \in V, \, \norm{v} = 1 \right\rbrace \geq F_t(p, v(V)) \, \mathrm{II}_p(v(V)) \geq \frac{K}{t} \, \mathrm{II}_p(v(V)),
\end{align*} 

\noindent where $v(V) \in V$ satisfies $\norm{v(V)} = 1$ and 

\begin{align*}
\mathrm{II}_p(v(V)) = \max \left\lbrace \mathrm{II}_p(v) : v \in V, \, \norm{v} = 1 \right\rbrace > 0.
\end{align*}

\noindent Hence, 

\begin{align*}
\mu_{j,t}(C_t(p)) &\geq \min \left\lbrace \frac{K}{t} \mathrm{II}_p(v(V)) : V \subseteq T_p M, \, \dim V = j \right\rbrace \\
&=\frac{K}{t} \min \left\lbrace \max \left\lbrace \mathrm{II}_p(v) : v \in V, \, \norm{v} = 1 \right\rbrace : V \subseteq T_p M, \, \dim V = j \right\rbrace \\
&=\frac{K}{t} \lambda_j(p).
\end{align*}

\noindent Setting  

\begin{align*}
\lambda = \min \left\lbrace \lambda_j(p) : p \in M, \, j \in \{1, \dots, n\} \right\rbrace > 0,
\end{align*}

\noindent we have

\begin{align*}
\mu_{j,t}(C_t(p)) \geq \frac{K}{t} \lambda
\end{align*} 

\noindent for every $p \in M$ and $0< t < \tfrac{1}{\sqrt{2}}$. Thus, provided that $t$ is sufficiently small, all the principal curvatures of $M_t$ are bigger than 
$1$ in absolute value, as we wanted.
\end{proof}

\begin{remark}
We observe that the same constructions done in the sphere can be done in the hyperbolic space using the Lorentzian model. In particular, one can prove
using the same technique that a compact hypersurface of the hyperbolic space having everywhere nonzero Gauss-Kronecker curvature is diffeomorphic to a
sphere. However, in the hyperbolic space, if  the Gauss-Kronecker curvature is nowhere zero then necessarily the principal curvatures of the hypersurface
have the same sign, and then the result follows from the Proposition of \cite{ripollpams}.
\end{remark}

\end{document}